\numberwithin{equation}{section} 
\numberwithin{figure}{section} 
  \theoremstyle{plain}
  \newtheorem{thm}{Theorem}[section]
  \theoremstyle{plain}
  \theoremstyle{plain}
  \theoremstyle{remark}
  \theoremstyle{remark}
  \theoremstyle{plain}
  \newtheorem{lem}[thm]{Lemma}
\def\bfR#1{{\bf R}^#1}
\def\com#1{ \hbox{#1}}
\def\eop{{\vrule height 6pt width 5pt depth 0pt}}\smallskip
\def\<{{\langle }}
\def\>{{\rangle }}
\def\bfR#1{{\bf R}^#1}
\def\com#1{ \quad\hbox{#1}\quad}
\def\eop{{\vrule height 6pt width 5pt depth 0pt}}\smallskip
\def\<{{\langle }}
\def\>{{\rangle }}
\begin{document}

\title{Algebraic constant mean curvature surfaces in Euclidean space}

\author{ Oscar M. Perdomo }

\date{\today}

\curraddr{Department of Mathematics\\
Central Connecticut State University\\
New Britain, CT 06050\\
}

\email{ perdomoosm@ccsu.edu}

\begin{abstract}

In this paper we prove that the only algebraic constant mean curvature (cmc) surfaces in $\bfR{3}$ of order less than four are the planes, the spheres and the cylinders. The method used heavily depends on the efficiency of algorithms to compute Groebner Bases and also on the memory capacity of the computer used to do the computations. We will also prove that the problem of finding algebraic constant mean curvature hypersurfaces in the Euclidean space completely reduces to the problem of solving a system of polynomial equations.
\end{abstract}

\subjclass[2000]{53C42, 53A10}

\maketitle
\section{Introduction}

There is no doubt that level sets of a polynomial function on $\bfR{3}$ are among the easiest surfaces to describe. These surfaces are called algebraic, and the degree of the polynomial defines the order of the algebraic surface. Some examples of surfaces with constant mean curvature (cmc) that are algebraic are planes, spheres, cylinders and Enneper surfaces with order 1, 2, 2 and 9 respectively. The planes and the Enneper surfaces are minimal, that is, they have mean curvature 0. It is known that any algebraic minimal surface in $\bfR{3}$ different from a plane must have order greater than 5, see page 162 of Nitsche book \cite{N}.
Here we will be considering non-minimal algebraic surfaces with cmc. The only known examples are the cylinders and the spheres. In this paper we give a first step toward a proof that these surfaces may be the only ones. We prove that the level set of an irreducible polynomial of degree three cannot be a surface with constant mean curvature. The method used consists in finding equations for the coefficients of the polynomial whose level set have cmc. Theoretically, we can get as many equations as we want but the computations needed to get the equations get harder and harder as the number of equations increases as well as the complexity of the equations obtained. 

The author would like to express his gratitude to Professor William Adkins for giving him references and explanations on the real nullstellensatz theorem.

\section{Main Result}

Let us start this section with the following lemma,

\begin{lem}\label{lemma 1} Let $f:\bfR{n}\to {\bf R}$ be a smooth function, $S=f^{-1}(0)$  and $S_r=\{x\in S:\nabla f(x)\, \ne\, {\bf 0}\}$. The mean curvature of $S_r$ is given by

$$ H(x)= \frac{1}{2 (n-1)|\nabla f|^3}\, (2 |\nabla f|^2\Delta f-\<\nabla |\nabla f|^2,\nabla f\>) $$

\end{lem}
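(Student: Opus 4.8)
The plan is to compute the mean curvature of the level set $S = f^{-1}(0)$ using the standard formula for the mean curvature of a hypersurface defined implicitly, namely as the divergence of the unit normal vector field, and then to simplify this expression into the stated form. At a regular point $x \in S_r$ where $\nabla f \ne {\bf 0}$, the unit normal to $S$ is $N = \nabla f / |\nabla f|$, and the mean curvature (with the convention that $H$ is the average of the principal curvatures, which accounts for the factor $\tfrac{1}{n-1}$ since a hypersurface in $\bfR{n}$ has $n-1$ principal curvatures) is given by $H = \tfrac{1}{n-1}\,\Div\!\left(\tfrac{\nabla f}{|\nabla f|}\right)$.

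First I would expand this divergence using the product rule for the scalar function $|\nabla f|^{-1}$ times the vector field $\nabla f$. This yields
$$
\Div\!\left(\frac{\nabla f}{|\nabla f|}\right) = \frac{\Div(\nabla f)}{|\nabla f|} + \<\nabla\!\left(|\nabla f|^{-1}\right), \nabla f\>.
$$
The first term is simply $\Delta f / |\nabla f|$. For the second term I would use $\nabla\!\left(|\nabla f|^{-1}\right) = -\tfrac{1}{2}|\nabla f|^{-3}\,\nabla|\nabla f|^2$, so that the inner product becomes $-\tfrac{1}{2}|\nabla f|^{-3}\<\nabla|\nabla f|^2, \nabla f\>$. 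Combining the two terms over the common denominator $|\nabla f|^3$ gives
$$
\Div\!\left(\frac{\nabla f}{|\nabla f|}\right) = \frac{1}{|\nabla f|^3}\left(|\nabla f|^2 \Delta f - \tfrac{1}{2}\<\nabla|\nabla f|^2, \nabla f\>\right).
$$

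Finally I would multiply by $\tfrac{1}{n-1}$ and factor out $\tfrac{1}{2}$ from inside the parentheses to match the stated normalization, obtaining
$$
H(x) = \frac{1}{2(n-1)|\nabla f|^3}\left(2|\nabla f|^2 \Delta f - \<\nabla|\nabla f|^2, \nabla f\>\right),
$$
which is exactly the claimed formula. The computation is essentially routine calculus; the only genuine subtlety worth stating carefully is the sign and normalization convention, so I would verify the formula against a known example (e.g.\ a sphere $f = |x|^2 - r^2$, where one should recover $H = 1/r$) to confirm the constant and orientation are correct. No serious obstacle is expected here, since the result is a direct translation of the divergence-of-the-unit-normal characterization of mean curvature into the language of $f$ and its derivatives.
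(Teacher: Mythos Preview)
Your argument is correct and arrives at the stated formula, but it proceeds by a different (though closely related) route from the paper. The paper works directly with the second fundamental form: taking $-\nabla f/|\nabla f|$ as the Gauss map, it writes $\hbox{II}(v,w)=\frac{1}{|\nabla f|}\langle D^2f(v),w\rangle$, and then obtains $(n-1)H$ as the tangential trace of $\frac{1}{|\nabla f|}D^2f$ by completing an orthonormal basis of $T_xS_r$ to a frame of $\bfR{n}$ with $\nabla f/|\nabla f|$, using $\Delta f=\langle D^2f(\frac{\nabla f}{|\nabla f|}),\frac{\nabla f}{|\nabla f|}\rangle+\sum_i\langle D^2f(v_i),v_i\rangle$ and $\langle\nabla|\nabla f|^2,\nabla f\rangle=2\langle D^2f\,\nabla f,\nabla f\rangle$ to isolate the tangential part. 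You instead invoke the divergence characterization $H=\frac{1}{n-1}\Div(\nabla f/|\nabla f|)$ and expand by the product rule. The two computations are essentially dual packagings of the same identity (the ``normal correction'' term in each is the same Hessian-in-the-normal-direction contribution), but your version is slightly more streamlined since it never explicitly introduces the orthonormal tangent frame. The one point that genuinely requires care in your approach is the sign and normalization convention hidden in ``$H=\frac{1}{n-1}\Div N$''; you acknowledge this and propose checking against the sphere, which is exactly the right way to pin it down.
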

\begin{proof}
Since $-\frac{\nabla f}{|\nabla f|}$ defines a Gauss map on $S_r$, we get that the second fundamental form on $S_r$ is given by

$$\hbox{II}(v,w)=\frac{1}{|\nabla f|} \<D^2 f(v),w\>\com{where $D^2f$ is the Hessian matrix of $f$ }$$

If follows that the mean curvature of $S_r$ at a point $x$ is given by

$$H(x)=\frac{1}{(n-1)|\nabla f|} \sum_{i=1}^{n-1}\<D^2 f(v_i),v_i\>\com{where $\{v_1,\dots, v_{n-1}\}$ is any orthonormal basis of $T_xS_r$ }$$

The lemma follows by using the following two facts: $\Delta f(x)=\<D^2 f(\frac{\nabla f}{|\nabla f|}),\frac{\nabla f}{|\nabla f|}\>+\sum_{i=1}^{n-1}\<D^2 f(v_i),v_i\>$ and $\<\nabla |\nabla f|^2,\nabla f\>=\nabla f\<\nabla f,\nabla f\>=2 \<D^2f\, \nabla f,\nabla f\>$.

\end{proof}

\begin{lem}\label{lemma 2}
 Let $f:\bfR{n}\to {\bf R}$ be an irreducible polynomial. If $f^{-1}(0)$ has constant mean curvature $H$, then there exists a polynomial $p:\bfR{n}\to {\bf R}$  such that

$$(2 |\nabla f|^2\Delta f-\<\nabla |\nabla f|^2,\nabla f\>)^2-4(n-1)^2H^2|\nabla f|^6=p\, f$$

\end{lem}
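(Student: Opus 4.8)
The plan is to show that the polynomial
$$ Q := (2|\nabla f|^2\Delta f - \<\nabla|\nabla f|^2,\nabla f\>)^2 - 4(n-1)^2 H^2|\nabla f|^6 $$
vanishes on the entire real zero set $f^{-1}(0)$, and then to deduce membership in the principal ideal generated by $f$ via the real Nullstellensatz. Observe first that $Q$ is genuinely an element of $\R[x_1,\dots,x_n]$, since every quantity appearing in it — the partial derivatives of $f$, the Laplacian $\Delta f$, and the entries of the Hessian $D^2 f$ — is polynomial, and $H$ is a constant.

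The first substantive step is to verify that $Q$ vanishes on $f^{-1}(0)$, splitting into two cases according to whether $\nabla f$ vanishes. On the regular part $S_r=\{x\in f^{-1}(0):\nabla f(x)\ne\mathbf 0\}$, the hypothesis that the mean curvature equals the constant $H$ together with Lemma~\ref{lemma 1} gives $2|\nabla f|^2\Delta f - \<\nabla|\nabla f|^2,\nabla f\> = 2(n-1)H|\nabla f|^3$; squaring this identity yields $Q=0$ at every point of $S_r$. At a point of $f^{-1}(0)$ where $\nabla f=\mathbf 0$, I would use the identity $\<\nabla|\nabla f|^2,\nabla f\>=2\<D^2f\,\nabla f,\nabla f\>$ recorded in the proof of Lemma~\ref{lemma 1}: both summands $2|\nabla f|^2\Delta f$ and $2\<D^2f\,\nabla f,\nabla f\>$ carry a factor of $\nabla f$ and hence vanish, while $|\nabla f|^6$ vanishes as well, so $Q=0$ there too. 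Consequently $Q$ vanishes at every real point of $f^{-1}(0)$, with no density argument needed.

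The remaining step is to pass from ``$Q$ vanishes on the real variety $f^{-1}(0)$'' to ``$f$ divides $Q$ in $\R[x_1,\dots,x_n]$.'' This is precisely where the real structure matters, since over $\R$ a polynomial vanishing on a real zero set need not lie in the ideal generated by $f$. I would invoke the real Nullstellensatz: for an irreducible $f\in\R[x_1,\dots,x_n]$ whose real zero set has dimension $n-1$ — equivalently, the principal ideal $(f)$ is real radical — one has that the ideal of all polynomials vanishing on $f^{-1}(0)$ equals $(f)$. Since $f^{-1}(0)$ carries a nonempty regular part $S_r$ that is a smooth hypersurface of dimension $n-1$, the dimension hypothesis holds; hence $(f)$ is real radical and every polynomial vanishing on $f^{-1}(0)$, in particular $Q$, is a multiple of $f$. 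Writing $Q=p\,f$ finishes the proof.

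The main obstacle is this last step: one must confirm that the irreducible $f$ defining a genuine cmc hypersurface generates a real radical ideal, so that vanishing on the real points alone forces divisibility by $f$. Establishing that $\dim f^{-1}(0)=n-1$, rather than a lower-dimensional or empty real locus, is exactly what licenses the application of the real Nullstellensatz, and it is the geometric cmc hypothesis — which guarantees a nonempty $(n-1)$-dimensional regular part $S_r$ — that supplies this dimension count.
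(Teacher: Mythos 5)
Your proposal is correct and follows essentially the same route as the paper: vanishing of the polynomial on $f^{-1}(0)$ via Lemma~\ref{lemma 1}, followed by the real Nullstellensatz for an irreducible $f$ whose real zero set contains a regular point (the paper cites Milnor's formulation of exactly this statement). You are in fact slightly more careful than the paper, since you explicitly check vanishing at the singular points of $f^{-1}(0)$ and flag the regular-point hypothesis that licenses the Nullstellensatz, both of which the paper leaves implicit.
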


\begin{proof}

It follows from Lemma \ref{lemma 1} and the following variation of the Real Nullstellensatz Theorem that can be found in page 14 of Milnor's book \cite{M}:

{\it Let $V$ be a real or complex algebraic set defined by a single polynomial equation $f(x) = 0$; with $f$ irreducible. In the real case make the additional hypothesis that $V$ contains a regular point of $f$. Then every polynomial which vanishes on $V$ is a multiple of $f$.}

\end{proof}

As a consequence of the previous lemma we have that the problem of finding algebraic hypersurfaces with constant mean curvature in $\bfR{n}$ reduces to the problem of solving a system of polynomial equations.

\begin{thm}
If $f:\bfR{3}\to {\bf R}$ is an irreducible polynomial, then $f^{-1}(0)$ cannot be an immersed surface with cmc.
\end{thm}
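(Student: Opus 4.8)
I take $f$ to be an irreducible polynomial of degree three, write $f=f_3+f_2+f_1+f_0$ for its decomposition into homogeneous pieces (so $\deg f_k=k$), and let $H$ be the constant mean curvature of $f^{-1}(0)$. The first step is to dispose of the minimal case. If $H=0$, then $f^{-1}(0)$ is an algebraic minimal surface of order three; by the result cited in the introduction (page 162 of \cite{N}), an algebraic minimal surface other than a plane must have order greater than five, so a cubic minimal surface would have to be a plane, contradicting that $f$ is an irreducible cubic. Hence from now on I may assume $H\ne 0$.

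Next I apply Lemma \ref{lemma 2} with $n=3$, which (since $f^{-1}(0)$ being an immersed cmc surface provides the required regular point) yields a polynomial $p$ with
$$\bigl(2|\nabla f|^2\Delta f-\langle\nabla|\nabla f|^2,\nabla f\rangle\bigr)^2-16H^2|\nabla f|^6=p\,f.$$
A degree count is decisive here: the inner expression $2|\nabla f|^2\Delta f-\langle\nabla|\nabla f|^2,\nabla f\rangle$ has degree five, so its square has degree ten, whereas $16H^2|\nabla f|^6$ has degree twelve. Thus the left-hand side $G$ has degree twelve and $p$ has degree nine. Comparing the degree-twelve homogeneous parts of $G=pf$ gives $-16H^2|\nabla f_3|^6=p_9\,f_3$, so $f_3\mid|\nabla f_3|^6$. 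This is a strong structural constraint on the leading cubic form, and analyzing it (splitting according to the factorization type of $f_3$ and noting that over $\bfR{3}$ the quartic $|\nabla f_3|^2$ can vanish on a real linear factor of $f_3$ only where $\nabla f_3$ itself vanishes) restricts $f_3$ to a short list of normal forms and is the natural first reduction.

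Before expanding the full identity I would trim the number of free parameters using the symmetries of the problem: rigid motions of $\bfR{3}$ (six parameters) preserve $H$, the order, and irreducibility; a dilation rescales $H$, so I normalize $H$ to a fixed nonzero value; and multiplying $f$ by a nonzero constant leaves the level set unchanged, so I normalize one leading coefficient. A general cubic has twenty coefficients, and after these normalizations together with the restriction on $f_3$ above only a handful remain genuinely free. I then write $p$ with undetermined coefficients, expand $G-pf$, and equate the coefficient of every monomial to zero. This produces a polynomial system in the surviving coefficients of $f$, the coefficients of $p$, and $H$.

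The final step is to show this system has no solution corresponding to an irreducible cubic with $H\ne 0$. I would compute a Groebner basis of the ideal generated by the coefficient equations, after adjoining $H\,t-1=0$ by Rabinowitsch's trick to enforce $H\ne 0$ and a similar inequation to keep $\deg f=3$, and argue that either $1$ lies in the ideal (no solutions at all) or every solution forces a degeneracy, namely $f$ reducible. The mathematical content of this last step is routine once the Groebner basis is in hand; the genuine obstacle is computational feasibility. The identity from Lemma \ref{lemma 2} generates many equations of high degree in many unknowns, and whether the Groebner basis terminates is extremely sensitive to the monomial ordering and, above all, to how aggressively the symmetry normalization and the constraint $f_3\mid|\nabla f_3|^6$ have reduced the variable count and the available memory. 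The main work is therefore organizing these preliminary reductions so that the computation becomes tractable.
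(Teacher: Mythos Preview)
Your sketch is mathematically coherent but is genuinely only a sketch: the decisive step---showing the polynomial system has no irreducible solution---is deferred to a Groebner-basis computation whose feasibility you yourself flag as uncertain. As written, the proposal introduces the roughly $220$ coefficients of the auxiliary polynomial $p$ of degree nine alongside the coefficients of $f$; even after your eight symmetry normalizations and the leading-form constraint $f_3\mid|\nabla f_3|^6$, this is a system in well over two hundred unknowns, and you give no argument that it terminates. So the proposal does not yet constitute a proof.

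The paper's argument follows a substantially different route and contains two ideas you do not use, both of which are what make the computation tractable. First, instead of relying only on the Euclidean symmetries, the paper invokes the Klotz--Osserman theorem: a complete cmc surface other than a sphere or cylinder has Gauss curvature of both signs, hence a curve of points with $K=0$. Placing such a point at the origin with the principal directions along the axes and principal curvatures $2$ and $0$ pins down \emph{three extra} quadratic coefficients ($b_1=1$, $b_2=0$, $b_4=0$) beyond what a generic rigid-motion normalization gives. Second, the paper never introduces $p$ at all. It works with the unsquared function $g=2|\nabla f|^2\Delta f-\langle\nabla|\nabla f|^2,\nabla f\rangle-4|\nabla f|^3$, which vanishes on $S$ near the chosen regular point, and repeatedly takes tangential derivatives $G_{i_1\cdots i_k}$ (Jacobian-type combinations ensuring the result still vanishes on $S$), evaluating each at the origin. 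This yields polynomial equations in the coefficients of $f$ alone. After a few rounds of elimination by hand, the residual Groebner-basis step involves only the five variables $a_1,a_2,b_5,b_6,b_3$ and finishes immediately. A separate Case~II handles the possibility that $\nabla f$ vanishes along the entire $K=0$ curve.

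Your observation that the top-degree part forces $f_3\mid|\nabla f_3|^6$ is correct and pleasant, but the paper makes no use of it; its reductions come entirely from local geometry at the special point. If you want to turn your outline into a proof, the essential missing ingredient is a mechanism---like the paper's tangential-derivative trick---for generating equations in the coefficients of $f$ \emph{without} carrying $p$ along.
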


\begin{proof}
Let us argue by contradiction. Let us assume that $S=f^{-1}(0)$ is a complete constant mean curvature surface different from a sphere or a cylinder. By considering a dilation of $S$ instead if needed, without loss of generality, we may assume that the mean curvature of $S$ is 1. By \cite{K}, we get that the Gauss curvature of $S$ must change sign. It follows that there must exist a non constant curve $\alpha$ in $S$ where the Gauss curvature vanishes. We will consider two cases: Case I when there is point in $\alpha$ where the  gradient of $f$ is not the zero vector and Case II when the gradient of $f$ vanishes on $\alpha$.

Let $S_r$ be the set of regular points in $S$, that is

$$S_r=\{x\in S\subset \bfR{3}: \nabla f(x)\, \ne\, (0,0,0)\}$$

Notice that $S_r$ is not empty, otherwise $M$ will be contained in the quadric $\frac{\partial f}{\partial x_1}^{-1}(0)$.

{\bf Case I:} Without loss of generality, we can assume that the origin ${\bf 0}=(0,0,0)$ is in $S_r$ and that the gradient of $f$ at ${\bf 0}$ is the vector $(0,0,1)$; that is, we may assume that $f$ takes the following form

\begin{eqnarray*}
f&=&a_1\text{  } x_1{}^3+a_2\text{  }x_2{}^3+a_3\text{  }x_3{}^3 +a_4\text{  }x_1{}^2 x_2 +a_5\text{  }x_1{}^2 x_3+a_6 x_2{}^2 x_1+a_7\text{
  }x_2{}^2 x_3+a_8\text{  }x_3{}^2 x_1+\\
& &a_9 x_3{}^2 x_2+
a_{10}\text{  }x_1\text{  }x_2\text{  }x_3+b_1 x_1{}^2+b_2 x_2{}^2+b_3 x_3{}^2+b_4 x_1\text{  }x_2+b_5 x_1 x_3 + b_6 x_2\text{  }x_3+x_3
\end{eqnarray*}

Since in this case we are assuming that there exists a regular point in $M$ where the Gauss curvature vanishes, we will also assume that the Gauss curvature at ${\bf 0}$ is $0$, and moreover, that  $(1,0,0)$ defines a principal direction associated with the principal curvature $2$ and that $(0,1,0)$ defines a principal direction associated with the principal curvature $0$. These assumptions imply that

\begin{eqnarray*}\label{set 1}
b_1=1,\quad b_2=0\com{and} b_4=0
\end{eqnarray*}

If we define the $g:\bfR{3}\to {\bf R}$ by

$$  g=2 \nabla f \, \Delta f \, -\, \<\nabla|\nabla f|^2,\nabla f\>\,  -\,  4|\nabla f|^3 $$

then, by Lemma \ref{lemma 1} we get that $g(x)=0$ anytime $f(x)=0$. It follows that the gradient of $g$ must be a multiple of
the gradient of $f$ near the origin. Therefore the functions

$$G_1=\frac{\partial g}{\partial x_1}\frac{\partial f}{\partial x_3}-\frac{\partial g}{\partial x_3}\frac{\partial f}{\partial x_1}\com{and}
G_2=\frac{\partial g}{\partial x_2}\frac{\partial f}{\partial x_3}-\frac{\partial g}{\partial x_3}\frac{\partial f}{\partial x_2}$$

must vanish at every point where $f$ vanishes. Using the same argument we have that the functions

$$G_{11}=\frac{\partial G_1}{\partial x_1}\frac{\partial f}{\partial x_3}-\frac{\partial G_1}{\partial x_3}\frac{\partial f}{\partial x_1},\,
G_{12}=\frac{\partial G_1}{\partial x_2}\frac{\partial f}{\partial x_3}-\frac{\partial G_1}{\partial x_3}\frac{\partial f}{\partial x_2}\com{and}
G_{22}=\frac{\partial G_2}{\partial x_2}\frac{\partial f}{\partial x_3}-\frac{\partial G_2}{\partial x_3}\frac{\partial f}{\partial x_2}$$

vanish at every point where $f$ vanishes. Likewise, we can keep using the same argument and define the functions

$$G_{111},\, G_{112},\, G_{122},\, G_{222},\, G_{1111},\, G_{1112},\, G_{1122},\, G_{1222},\, G_{2222}$$

$$G_{11111},\, G_{11112},\, G_{11122},\, G_{11222},\, G_{12222}\com{and} G_{22222}$$

When we evaluate the functions above at {\bf 0} we get polynomial equations on the coefficients of $f$. They are of the form

$$q_{i_1\dots i_k}=0\com{where} q_{i_1\dots i_k}=G_{i_1\dots i_k}(0,0,0)$$

A direct computation shows that $q_1$ and $q_2$ are given by

$$q_1= 4 \left(3 a_1+a_6-3 b_5\right)\com{and}
q_2=4 \left(3 a_2+a_4-b_6\right)$$

Therefore, from the equations $q_1 =0$ and $q_2=0$ we get that
\begin{eqnarray*}\label{set 2}
a_6=-3 a_1+3 b_5\com{and}a_4=-3 a_2+b_6
\end{eqnarray*}
A direct computation shows that

\noindent\(q_{11}\, =\, -8 \left(6 a_5+a_7-3 \left(-2+2 b_3-a_1 b_5+b_5^2+a_2 b_6\right)\right)\)

\noindent\(q_{12}\, =\, -12 \left(a_{10}-2 \left(a_2 b_5+\left(a_1-b_5\right) b_6\right)\right)\)

\noindent\(q_{22}\, =\, -8 \left(a_7+3 \left(-a_1 b_5+b_5^2+a_2 b_6\right)\right)\)

Therefore, from the equations $q_{11} =0$, $q_{12}$ and $q_{22}=0$ we get that
\begin{eqnarray*}\label{set 3}
a_5&=& -1+b_3-a_1 b_5+b_5^2+a_2 b_6\\
a_{10} &=& 2 \left(a_2 b_5+\left(a_1-b_5\right) b_6\right)\\
a_7&=& 3 a_1 b_5-3 \left(b_5^2+a_2 b_6\right)
\end{eqnarray*}

A direct computation shows that

\noindent\(q_{111}=24 \left(10 a_8+4 a_1^2 b_5+b_5 \left(13+6 a_2^2-7 b_3+4 b_5^2+a_2 b_6-3 b_6^2\right)+a_1 \left(-23+7 b_3-8 b_5^2+2 a_2 b_6+3 b_6^2\right)\right)\)

\noindent\(q_{112}=-24 \left(-2 a_9+4 a_2^2 b_6+\left(2-2 a_1^2+a_1 b_5+b_5^2\right) b_6+a_2 \left(-11+5 b_3-6 a_1 b_5+7 b_5^2+4 b_6^2\right)\right)\)

\noindent\(q_{122}=24 \left(12 a_1^2 b_5-a_1 \left(-3+3 b_3+24 b_5^2+10 a_2 b_6+3 b_6^2\right)+b_5 \left(2 a_2^2+7 a_2 b_6+3 \left(-1+b_3+4 b_5^2+b_6^2\right)\right)\right)\)

\noindent\(q_{222}=24 \left(20 a_2^2 b_6+3 \left(2 a_1^2-5 a_1 b_5+3 b_5^2\right) b_6+a_2 \left(1+b_3-14 a_1 b_5+15 b_5^2+4 b_6^2\right)\right)\)

Therefore, from the equations $q_{111} =0$ and $q_{112}=0$ we get that

\begin{eqnarray*}\label{set 4}
a_8&=&\frac{1}{10} \big{(}-4 a_1^2 b_5+a_1 \left(23-7 b_3+8 b_5^2-2 a_2 b_6-3 b_6^2\right)-b_5 \left(13+6 a_2^2-7 b_3+4 b_5^2+a_2 b_6-3 b_6^2\right)\big{)}\\
a_9&=&\frac{1}{2} \left(4 a_2^2 b_6+\left(2-2 a_1^2+a_1 b_5+b_5^2\right) b_6+a_2 \left(-11+5 b_3-6 a_1 b_5+7 b_5^2+4 b_6^2\right)\right)
\end{eqnarray*}

A direct computation shows that
\begin{eqnarray*}
q_{1111}&=& -\frac{48}{5} \big{(}96 a_1^3 b_5+120 a_2^3 b_6+3 a_1^2 \left(-19+26 b_3-86 b_5^2-4 a_2 b_6-6 b_6^2\right)+\\
& &3 a_1 b_5 \left(73-12 a_2^2-47b_3+76 b_5^2-38 a_2 b_6+12 b_6^2\right)+ \\
& &a_2 b_6 \left(-295+85 b_3+156 b_5^2+30 b_6^2\right)+a_2^2 \left(75+60 b_3+56 b_5^2+110 b_6^2\right)+\\
& & 3 \left(50+50 a_3-54 b_5^2-22 b_5^4+b_3 \left(-50+21 b_5^2\right)-6 b_5^2 b_6^2\right)\big{)}\\
q_{1112}&=&-\frac{24}{5} \big{(}10 a_1^3 b_6-3 a_1^2 b_5 \left(26 a_2+55 b_6\right)+\\
& &b_5\left(-162 a_2^3+223 a_2^2 b_6+a_2 \left(-421+169 b_3+67 b_5^2-139
b_6^2\right)-5 b_6 \left(-27+15 b_3+35 b_5^2+6 b_6^2\right)\right)+\\
& &a_1 \left(-74 a_2^2 b_6+15 b_6 \left(-9+5 b_3+22 b_5^2+2 b_6^2\right)+a_2 \left(496-214
b_3+b_5^2+129 b_6^2\right)\right)\big{)}\\
q_{1122}&=&\frac{24}{5} \big{(}48 a_1^3 b_5-160 a_2^3 b_6-2 a_2^2 \left(-85+70 b_3+181 b_5^2-70 b_6^2\right)+\\
& &2 a_1^2 \left(-183+72 b_3+108 b_5^2+52
a_2 b_6-12 b_6^2\right)+\\
& &3 b_5^2 \left(-117+63 b_3+104 b_5^2+17 b_6^2\right)+3 a_2 b_6 \left(-30+30 b_3+221 b_5^2+20 b_6^2\right)-\\
& &a_1 b_5 \left(-717-312 a_2^2+333 b_3+576 b_5^2+782 a_2 b_6+27 b_6^2\right)\big{)}\\
q_{1222}&=&\frac{24}{5} \big{(}90 a_1^3 b_6+3 a_1^2 b_5 \left(86 a_2+45 b_6\right)+\\
& &b_5 \left(-18 a_2^3+467 a_2^2 b_6+15 b_6 \left(-9+9 b_3+21 b_5^2+8
b_6^2\right)+a_2 \left(-339+231 b_3+513 b_5^2+259 b_6^2\right)\right)-\\
& &a_1 \left(186 a_2^2 b_6+15 b_6 \left(-9+9 b_3+36 b_5^2+8 b_6^2\right)+ a_2 \left(-384+246
b_3+771 b_5^2+379 b_6^2\right)\right)\big{)}\\
q_{2222}&=&48 \big{(}8 a_2^3 b_6+3 a_1 b_5 \left(18-4 a_2^2-12 b_3+12 b_5^2-25 b_6^2\right)+\\
& &9 b_5^2 \left(-3+2 b_3-2 b_5^2+5 b_6^2\right)+a_2 b_6
\left(9+5 b_3+7 b_5^2+18 b_6^2\right)+\\
& &a_1^2 \left(-27+18 b_3-18 b_5^2-4 a_2 b_6+30 b_6^2\right)+a_2^2 \left(-31+28 b_3+22 b_5^2+66 b_6^2\right)\big{)}\\
\end{eqnarray*}

By a direct computation we also get that

\begin{eqnarray*}
q_{11111}&=&24 \big{(}-6 a_1^3 b_5 \left(82 a_2-195 b_6\right)-180 a_1^4 b_6+600 a_2^4 b_6+\\
& &2 a_2^3 \left(-1095+375 b_3+529 b_5^2-420 b_6^2\right)+45
b_5^2 b_6 \left(-14+8 b_3-34 b_5^2+13 b_6^2\right)+\\
& &2 a_2^2 b_6 \left(-345+150 b_3-891 b_5^2+50 b_6^2\right)+\\
& &2 a_1^2 \left(42 a_2^2 b_6+a_2 \left(-903+267
b_3+303 b_5^2-182 b_6^2\right)+15 b_6 \left(-21+12 b_3-111 b_5^2+13 b_6^2\right)\right)+\\
& &a_2 \left(-438 b_5^4+3 b_5^2 \left(-542+58 b_3-583 b_6^2\right)+5
b_6^2 \left(37+13 b_3+44 b_6^2\right)\right)+\\
& &a_1 b_5 \left(-828 a_2^3+1858 a_2^2 b_6-15 b_6 \left(-84+48 b_3-258 b_5^2+65 b_6^2\right)+a_2 \left(3432-708
b_3+324 b_5^2+2138 b_6^2\right)\right)\big{)}\\
q_{11112}&=&-\frac{72}{5} \big{(}72 a_1^4 b_5+2 a_1^3 \left(-477+63 b_3-744 b_5^2-82 a_2 b_6-183 b_6^2\right)+\\
& &2 a_1^2 b_5 \left(1056-250 a_2^2+51 b_3+2016
b_5^2+621 a_2 b_6+404 b_6^2\right)+\\
& &b_5 \big{(}-12 a_2^4+1272 b_5^4-78 a_2^3 b_6-25 b_6^2 \left(-11+11 b_3+9 b_6^2\right)+\\
& &b_5^2 \left(204+354 b_3+76
b_6^2\right)-\\
& &a_2 b_6 \left(-902+498 b_3+476 b_5^2+167 b_6^2\right)-2 a_2^2 \left(-732+253 b_3+150 b_5^2+745 b_6^2\right)\big{)}+\\
& &a_1 \big{(}-3888 b_5^4+396
a_2^3 b_6+25 b_6^2 \left(-11+11 b_3+9 b_6^2\right)+2 a_2^2 \left(-797+243 b_3+410 b_5^2+111 b_6^2\right)-\\
& &2 b_5^2 \left(681+291 b_3+259 b_6^2\right)+2
a_2 b_6 \left(274 b_3-7 \left(78+43 b_5^2-28 b_6^2\right)\right)\big{)}\big{)}\\
q_{11122}&=&-\frac{24}{5} \big{(}120 a_2^4 b_6-a_2^2 b_6 \left(2140+1020 a_1^2-1960 b_3+2720 a_1 b_5-4211 b_5^2+940 b_6^2\right)+\\
& &2 a_2^3 \left(-435+75 b_3+90 a_1 b_5-134 b_5^2+1420 b_6^2\right)+\\
& &3 \left(a_1-b_5\right) b_6 \big{(}180 a_1^3-672 a_1^2 b_5-3 a_1 \left(-418+122 b_3+67 b_5^2-42 b_6^2\right)+\\
& &b_5 \left(-1179+531 b_3+693 b_5^2+124 b_6^2\right)\big{)}+\\
& &a_2 \big{(}1860 a_1^3 b_5-2 a_1^2 \left(-525+465 b_3+2916 b_5^2+844 b_6^2\right)+a_1 b_5 \left(-3006+2364
b_3+6429 b_5^2+8525 b_6^2\right)+\\
& &3 \left(30+60 a_3-10 b_3^2+637 b_5^2-819 b_5^4+350 b_6^2-2249 b_5^2 b_6^2-220 b_6^4-b_3 \left(20+473 b_5^2+350 b_6^2\right)\right)\big{)}\big{)}\\
\end{eqnarray*}

A direct computation using a computer program (Mathematica for example) shows that a Groebner Basis for the set of polynomials

$$\{q_{122},q_{222},q_{1111},q_{1112},q_{1122},q_{1222},q_{2222},q_{11111},q_{11112},q_{11122}\}$$

with respect to the variables $\left\{a_1,a_2,b_5,b_6,b_3\right\}$ is \(\left\{-1-a_3+b_3,a_2,a_1-b_5\right\}\). Therefore it follows that

$$a_2=0,\quad b_5=a_1\com{and} b_3=1+a_3$$

Replacing these and the previous equations for the coefficients of $f$ we get that

$$f=\left(1+a_1 x_1+b_6 x_2+a_3 x_3\right) \left(x_1^2+x_3+x_3^2\right)$$

which is a contradiction because $f$ is irreducible. This finishes the proof of Case I.

Let us prove case II. Let us assume that the curve $\alpha$ defined above contains the origin, it is parametrized by arc-length and its velocity at the origin is the vector $(1,0,0)$. The assumption made in case II implies that not only $f$ vanishes on the points in $\alpha$ but all the 3 partial derivatives of $f$ vanish on these points. Since we are assuming that the origin is in $S$ then,

\begin{eqnarray*}
 f&=&a_1 x_1^3 + a_2 x_2^3 + a_3 x_3^3 + a_4 x_1^2x_2 + a_5 x_1^2x_3 +
  a_6x_2^2x_1 + a_7  x_2^2 x_3 + a_8 x_3^2 x_1 + \\
 & &a_9 x_3^2 x_2 + a_{10} x_1 x_2 x_3 + b_1 x_1^2 + b_2 x_2^2 + b_3 x_3^2 + b_4 x_1 x_2 +
  b_5 x_1 x_3 + b_6 x_2 x_3
\end{eqnarray*}

Since the functions $t\to \frac{\partial f}{\partial x_i}(\alpha(t))$ vanish, then their derivatives at zero are zero, that is,
$\frac{\partial^2 f}{\partial x_i\partial x_1}(0)=0$; therefore

$$b_{11}=0,\quad b_4=0 \com{and} b_5=0$$

A direct computation using the fact that $\<\alpha^{\prime\prime}(0),\alpha^{\prime}(0)\>=0$, shows that the second derivative of the function $t\to \frac{\partial f}{\partial x_1}(\alpha(t))$ at zero is $\frac{\partial^3 f}{\partial x_1^3}(0)$; therefore, we also have that

$$a_1=0$$

These equations on the coefficients of $f$ imply that $f(x_1,0,0)=0$ for all $x_1\in {\bf R}$, that is, we have that the $x_1$-axis is contained in $S$. Let us define

$$u=(2 |\nabla f|^2\Delta f-\<\nabla |\nabla f|^2,\nabla f\>)^2-4(n-1)^2 |\nabla f|^6$$

by either Lemma \ref{lemma 1} or Lemma \ref{lemma 2}, we have that the polynomial in the variable $x_1$,
$p_1(x_1)=u(x_1,0,0)$ vanishes. A direct computation shows that the coefficient of $x_1^{12}$ is $-16 (a_4^2 + a_5^2)^3$; therefore,

$$a_4=0\com{and} a_5=0$$

If we assume that $a_8\ne 0$, then, a direct computation shows that $f(-\frac{b_3+a_3 x_3}{a_8},0,x_3)=0$ for all $x_3$; therefore, we would have that $p_2(x_3)=u(-\frac{b_3+a_3 x_3}{a_8},0,x_3)$ must also vanish.  A direct computation shows that the coefficient of $x_3^{12}$ is $-\frac{16((a10 a3 - a8 a9)^2 + a8^2 (a3^2 + a8^2))^3}{a_8^6}$; therefore, $a_8$ must be zero which contradict the initial assumption that $a_8\ne0$. Therefore

$$a_8=0$$

If we assume that $a_6\ne 0$, then, a direct computation shows that $f(-\frac{b2 + a2 x2}{a6},x_2,0)=0$ for all $x_2$; therefore, we would have that $p_3(x_2)=u(-\frac{b2 + a2 x2}{a6},x_2,0)$ must also vanish.  A direct computation shows that the coefficient of $x_2^{12}$ is $-\frac{16((a10 a2 - a6 a7)^2 + a6^2 (a2^2 + a6^2))^3}{a_6^6}$; therefore, $a_6$ must be zero which contradict the initial assumption that $a_6\ne0$. Therefore

$$a_6=0$$

At this point we may assume that $a_{10}$ is not zero, otherwise $f$ would be independent of $x_1$ which is impossible since the only cylinder that has mean curvature 1 is a circular cylinder. Without loss of generality we may assume that

$$a_{10}=1$$

A direct computation shows that for any nonzero $x_2$ and $x_3$

$$f\big{(}\,\frac{-b_2 x_2^2 - a_2 x_2^3 - b_6 x_2 x_3 - a_7 x_2^2 x_3 - b_3 x_3^2 - a_9 x_2 x_3^2 -
 a_3 x_3^3}{ x_2 x_3}\, ,\, x_2\, ,\, x_3\, \big{)}=0$$

 Therefore the polynomial function

 $$p_4(x_2,x_3)=(x_2x_3)^6\, u\big{(}\,\frac{-b_2 x_2^2 - a_2 x_2^3 - b_6 x_2 x_3 - a_7 x_2^2 x_3 - b_3 x_3^2 - a_9 x_2 x_3^2 -
 a_3 x_3^3}{ x_2 x_3}\, ,\, x_2\, ,\, x_3\, \big{)}$$

 must also vanish. A direct computation shows that the coefficient of the term $x_3^{18}$ of $p_4$ is $-16b_3^6$; therefore,

 $$b_3=0$$

 A direct computation shows that the coefficient of the term $x_3^{24}$ of $p_4$ is $-16a_3^6$; therefore,

 $$a_3=0$$

Therefore the $f$ takes the following form: $x_2 (b_2 x_2 + a_2 x_2^2 + b_6 x_3 + x_1 x_3 + a_7 x_2 x_3 + a_9 x_3^2)$. This is a contradiction because we are assuming that $f$ is irreducible. This finishes the proof of the theorem. \eop
\end{proof}

\end{document}